\newtheorem{thm}{Theorem}[section]
\newtheorem{lem}[thm]{Lemma}
\newtheorem{ques}{Question}[section]
\newtheorem{cor}{Corollary}[section]
\newcommand{\expoi}{{E(A_t(P^{\textrm{Poi}}_n))}}
\newcommand{\exn}{{E(A_t(P^n_n))}}
\newcommand{\exx}{{E(A_t(P^x_n))}}
\def\lr{\left(}
\def\rr{\right)}
\title{The Total Acquisition Number of the Randomly Weighted Path}
\author{Anant Godbole\\
Department of Mathematics \& Statistics \\
East Tennessee State University\\{\tt godbolea@mail.etsu.edu}\and
Elizabeth Kelley\\
Department of Mathematics \\
University of Minnesota\\{\tt ekelley@g.hmc.edu}\and
Emily Kurtz\\
Department of Mathematics \\
Wellesley College\\{\tt ekurtz@wellesley.edu}\and
Pawe\l\ Pra\l at\\
Department of Mathematics \\
Ryerson University\\{\tt pralat@ryerson.ca}\and
Yiguang Zhang\\
Department of Applied Mathematics \& Statistics \\
The Johns Hopkins University\\
{\tt yzhan132@jhu.edu}}
\begin{document}
\maketitle
\smallskip

\noindent {\it Keywords: Total acquisition number, Poissonization, dePoissonization, Maxwell-Boltzman and Bose-Einstein allocation}

\begin{abstract}
There exists a significant body of work on determining the acquisition number $a_t(G)$ of various graphs when the vertices of
those graphs are each initially assigned a unit weight. We determine properties of the acquisition number of the path, star, complete, complete bipartite, cycle, and wheel graphs for variations on this initial weighting scheme, with the majority of our work focusing on the expected acquisition number of randomly weighted graphs. In particular, we bound the expected acquisition number $E(a_t(P_n))$ of the $n$-path when $n$ distinguishable ``units" of integral weight, or chips, are randomly distributed across its vertices between $0.242n$ and $0.375n$. With computer support, we improve it by showing that $E(a_t(P_n))$ lies between $0.29523n$ and $0.29576n$. We then use subadditivity to show that the limiting ratio $\lim E(a_t(P_n))/n$ exists, and simulations reveal more exactly what the limiting value equals.  The Hoeffding-Azuma inequality is used to prove that the acquisition number is tightly concentrated around its expected value.  Additionally, in a different context, we offer a non-optimal acquisition protocol algorithm for the randomly weighted path and exactly compute the expected size of the resultant residual set.
\end{abstract}

\section{Introduction}
In this paper, we consider vertex-weighted graphs and denote the weight of vertex $v$ as $w(v)$. Let $G$ be a graph with an initial weight of 1 on each vertex. For adjacent $v,u \in V(G)$, weight can be transferred from $v$ to $u$ via an \emph{acquisition move} if the initial weight on $u$ is at least as great as the weight on $v$. When there are no remaining acquisition moves, the set of vertices with non-zero weight forms an independent set referred to as the \emph{residual set}. The minimal cardinality of this set, $a_t(G)$, is the \emph{total acquisition number} of $G$. A set of acquisition moves that results in a residual set is referred to as an \emph{acquisition protocol} and is \emph{optimal} if the independent set has cardinality $a_t(G).$ This conception of acquisition number was first introduced by \cite{lampertSlater} and has subsequently been investigated in \cite{totalAcquisition, unitAcq}. 

When acquisition moves are allowed to transfer any integral amount of weight from a vertex, the minimum cardinality of the residual set is called the \emph{unit acquisition number}, denoted $a_u(G)$; see \cite{unitAcq}. If acquisition moves are allowed to transfer any non-zero amount of weight from a vertex, the minimum cardinality of the residual set is called the \emph{fractional acquisition number}, denoted $a_f(G)$; see \cite{fractionalAcquisition}. Because we consider only total acquisition number, any instances of the term ``acquisition number" in this paper should be understood to mean ``total acquisition number".

Although we offer a few minor results for graphs with the canonical weighting scheme (where each vertex has initial weight 1), we primarily consider variants on that weighting scheme where the initial weights of vertices are allowed to assume any integral value. Any $n$-vertex graph with vertex labels $\{ 1, 2, \dots, n \}$ can be associated with an integer sequence $(a_1,a_2,\dots,a_n)$ where $a_i$ denotes the initial weight given to vertex $i$. For such \emph{weight sequences} and particular classes of graphs, we consider in Section 2 the size $a_{\textrm{max}}(G)$ of the largest possible residual set, whether $a_t(G)$ changes or remains the same as the unit weight case, and the existence of legal residual sets with sizes equal to every integral value in the interval $[a_t(G),a_{\textrm{max}}(G)]$.  Our primary focus is, however, on the total acquisition number of graphs with randomly weighted vertices.  Although there exists work on random graphs whose vertices each begin with unit weight due to \cite{totalAcquisitionRandom}, we are not aware of any existing work on graphs with randomly-weighted vertices. In Section~2, we make some preliminary remarks. In Section 3, we obtain bounds on the total expected acquisition number of the randomly weighted path, where vertex weighting is assigned according to both the Poisson and Maxwell-Boltzman distributions (in the latter case, the chips are thus considered to be distinguishable, and obviously the Bose-Einstein distribution might yield completely different results!).  We also show that the limiting ratio $\lim E(a_t(P_n))/n$ exists, and that the acquisition number is tightly concentrated around its expected value.  Additionally, in a different context, we offer a non-optimal acquisition protocol algorithm for the randomly weighted path and exactly compute the expected size of the resultant ``residual" set.


\section{Basic results}
In this section, we provide some basic results. The proofs of these results are not very difficult, but the underlying logic is important for Section 3.
\subsection{$a_t(G)=1$}
In this subsection, we consider graphs in which the vertices can have any non-negative integer weight. One question we could ask is what is the smallest maximum vertex weight necessary to drive $a_t(G)$ down to 1? Denote such a value as $smv(G)$. We  specifically consider the complete graph $K_n$ on $n$ vertices; the $n$-cycle and $n$-path $C_n, P_n$; and the star, wheel, and complete bipartite graphs denoted respectively by $K_{1,n}, W_n,$ and $K_{n,m}$. 

It is clear that the $smv(K_n)=smv(W_n)=smv(K_{1,n})=1$, because we can have a special vertex (the center vertex for $W_n$ and $K_{1,n}$; any vertex for $K_n$) absorb the weight of its neighbors first and thus make it the largest weighted vertex in $V(G)$.

For $K_{n,m}$, its $smv$ value is 1 as well. Let $A,B$ be the two vertex sets of $K_{n,m}$, where $|A|\geq |B|$. Let the initial weight of all vertices be 1. Let two vertices $v_A, v_B$ be two arbitrary vertices from $A$ and $B$. At the first step, $v_A$ can acquire all the weight from vertices in $B \backslash \{v_B\}$ and $v_B$ can acquire all the weight from vertices in $A \backslash \{v_A\}$. Because $|A| \geq |B|$, $w(v_B) \geq w(v_A)$ and $v_B$ can acquire the weight from $v_A$. Thus, the $smv$ value for complete bipartite graphs is also 1. We next consider paths and cycles, for which the situation is more nuanced.
\begin{lem}
Let $P_n = v_1v_2 \cdots v_n$ be a path on $n \geq 2$ vertices, where $v_1$ and $v_n$ are the endpoints of the path. Let each vertex have weight at least one.  In order for $v_n$ to be the only vertex in the residual set, its initial weight must be at least $2^{n-2}$.
\end{lem}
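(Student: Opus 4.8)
The plan is to analyze an arbitrary acquisition protocol that leaves $v_n$ as the sole residual vertex and to extract a lower bound on $w(v_n)$ by tracking how weight must funnel down the path. The structural engine of the argument is one elementary but decisive observation about total acquisition: once a vertex has weight $0$ it can never again become positive. Indeed, for a neighbor $u$ to transfer weight into an empty vertex $v$, the move requires the current weight of $v$ to be at least that of $u$; since $v$ holds $0$, this forces $w(u)=0$, and the move transfers nothing. First I would record this as a preliminary claim: every vertex other than $v_n$ is emptied exactly once, only gains weight before that moment, and remains at $0$ afterward.

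Next I would pin down the \emph{direction} of each emptying. Writing $\tau_k$ for the unique time at which $v_k$ (for $1 \le k \le n-1$) transfers away all of its weight, I would argue that this transfer must go rightward, into $v_{k+1}$. If instead $v_k$ emptied to the left, the weight deposited there would still have to reach $v_n$ and hence cross $v_k$ again at some later time; but $v_k$ is permanently empty after $\tau_k$ and can no longer receive anything, a contradiction. Since $v_{k+1}$ likewise makes only a single (rightward) transfer and never sends weight back to $v_k$, the vertex $v_k$ can receive weight only from $v_{k-1}$, and it receives $v_{k-1}$'s entire load. This forces the nesting $\tau_1 < \tau_2 < \cdots < \tau_{n-1}$ and lets me define $W_k$ to be the amount $v_k$ ships out at $\tau_k$, with $W_1 = w(v_1) \ge 1$.

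The heart of the proof is then a doubling recursion. At time $\tau_{k-1}$ the vertex $v_k$ still carries only its initial weight $w(v_k)$, having received nothing yet, so the acquisition condition for the move $v_{k-1}\to v_k$ forces $w(v_k) \ge W_{k-1}$; after absorbing $v_{k-1}$ it holds $W_k = w(v_k)+W_{k-1} \ge 2W_{k-1}$, and it gains nothing further before $\tau_k$. Iterating gives $W_{n-1} \ge 2^{n-2}W_1 \ge 2^{n-2}$. Finally, the single transfer into $v_n$ occurs at $\tau_{n-1}$, and beforehand $v_n$ is untouched and still holds its initial weight; the acquisition condition for $v_{n-1}\to v_n$ therefore reads $w(v_n) \ge W_{n-1} \ge 2^{n-2}$, which is the claim.

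I expect the main obstacle to be one of rigor rather than depth: one must rule out the seductive possibility that $v_n$ (or an interior vertex) is fed in many small installments, which would weaken the binding inequality. This is exactly what the ``empty stays empty'' observation forecloses, since it collapses each vertex's entire history into a single outgoing transfer and forces all incoming weight to arrive in one lump from the left. Care is needed to state and apply that monotonicity cleanly, but once it is in hand the directional step and the doubling recursion are routine.
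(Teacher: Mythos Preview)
Your argument is correct and rests on the same doubling mechanism as the paper's induction: weight must funnel rightward one vertex at a time, and at each handoff the receiving vertex must already match the incoming load, forcing $W_k \ge 2W_{k-1}$. The paper compresses this into a terse induction on $n$ with the minimal configuration $1\text{-}1\text{-}2\text{-}\cdots\text{-}2^{n-2}$, whereas you unroll it and make explicit the ``empty stays empty'' observation and the rightward-only direction of each emptying---details the paper's proof tacitly assumes but does not spell out.
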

\begin{proof}
We prove this lemma by inducting on $n$. The base cases are trivial (simply assign both vertices in $P_2$ an initial weight of 1, and use the configuration 1-1-2 for $P_3$ as the minimal cases; higher initial weights on the non-terminal vertices would simply need us to have more weight on $v_n$). Suppose the statement is true for $P_n$ with initial configuration $1-1-2-\ldots 2^{n-2}$ and let us consider $P_{n+1}$.
After acquiring all the weights from $v_1$ through $v_{n-1}$, $v_n$ has weight at least $2^{n-1}$.  Thus, $v_{n+1}$ must have initial weight at least $2^{n-1}$ as well.
\end{proof}
The problem of finding the smallest maximum vertex weight necessary to drive $a_t(G)$ down to 1 for a path on $n$ vertices is equivalent to finding the smallest initial weight for the middle vertex to absorb all the weight in that path.  By Lemma 2.1, a weight of $2^m$ is needed to take all the weight on an $(m+2)$-path to a leaf vertex; applying this fact to the two middle vertices, we see that a weight of $2^m$ suffices to move all the weight on $P_{2m+4}$ to these vertices and then to one of them.  Solving $n=2m+4$ for $m$, we get $m=n/2-2$ for even $n$.  If $n$ in odd, we get $m=\lceil n/2\rceil-2$ by the same reasoning.  It follows immediately that $smv(C_n)=2^{\lceil n/2 \rceil -2}$ as well.  The $smv$ value for a $m \times n$ grid graph is thus at most $2^{\lceil n/2 \rceil + \lceil m/2 \rceil - 4}$; here we use the strategy of moving all the weight in each row to the center vertex, and then all the weight in the middle column to the center of the grid. For the lower bound, let us note that, regardless where the absorbing vertex $v$ is, at least one of the four corners is at distance $\lceil (m-1)/2 \rceil + \lceil (n-1)/2 \rceil$ from $v$. Hence, at the time when the weight from this corner is pushed to the final destination, the weight at $v$ must be at least $2^{\lceil (m-1)/2 \rceil + \lceil (n-1)/2 \rceil-1}$. However, perhaps some of this weight comes from the other three neighbours of $v$. As a result, we only get that the initial weight at $v$ is at least $2^{\lceil (m-1)/2 \rceil + \lceil (n-1)/2 \rceil-4}$, which is matching the upper bound for $m,n$ both even, and is always by a multiplicative factor of at least $1/4$ away from it. 

\subsection{Size of a residual set}
Let $G=(V,E)$ be an arbitrary graph. Note that the size of the maximum independent set is a natural upper bound of the size of an residual set. By choosing the weight of vertices in $G$ strategically, how many different sizes of residual sets can we get for a given graph? In this subsection, we focus on $K_n, C_n, P_n, W_n,$, and $K_{n,m}$. 

It is clear that the size of the residual set of $K_n$ must be 1 for any assignment of weights, because the maximum independent set of $K_n$ has size 1. Now, consider $C_n$. Because the size of the maximum independent set is $\lfloor \frac{n}{2}\rfloor$, the residual set can be no larger than $\lfloor \frac{n}{2}\rfloor$. Indeed, by assigning the vertices in a largest independent set of $C_n$ the first $\lfloor \frac{n}{2}\rfloor$ largest weights, we can obtain a residual set with $\lfloor \frac{n}{2}\rfloor$ vertices. For example, $\lfloor n/2\rfloor$ 2's and $\lceil n/2\rceil$ 1's can do the job.  Now, to obtain a residual set of size $i$, where $1\leq i \leq \lfloor \frac{n}{2}\rfloor$, we need to choose $i$ vertices which form an independent set that are ``equally spaced to the extent possible", and strategically assign the values of the $i$ largest weights to these vertices so as to enable those vertices to acquire the weights of the other vertices.  This can be achieved because of the reasoning in Section 2.1 by using weights of $1,1, 2,\ldots,2^{\lfloor n/i\rfloor-2}$ on each of the $i$ paths that the cycle can be thought of as being comprised of.  Next we see that the size of the residual set of $W_n$ can be any integer from $1$ to $\lfloor \frac{n}{2}\rfloor$ as well. We can place the smallest weight on the center vertex to reduce the problem to the problem of $C_n$ after the first move. With the same argument, paths can have residual set with size from $1$ to $\lceil \frac{n}{2} \rceil$, where the upper bound is the size of the largest independent set of $P_n$. 

The size of the residual set of $K_{m,n}$ for $m\geq n$ can be any integer from 1 to $m$.  By placing the $m$ largest numbers of the sequence on the vertices of the larger side of the bipartite graph, we ensure that no vertex on the smaller side can acquire any additional chips, thus resulting in a residual set including all vertices of the larger side.  After choosing appropriate weights of vertices, we can ensure that exactly one vertex on the larger side is acquired by assigning the smallest number to one vertex on the larger side, then assigning the next $n$ smallest numbers to the vertices of the smaller side.  Then, as long as the sum of the smallest number and the $m^{\textrm{th}}$ largest number is smaller than the ${m-1}^{th}$ largest number, we have a residual set of size $m-1$.  We can use a similar strategy to get any other number between 1 and $m-2$.

\subsection{Subadditivity} 

Now, we consider $P_n$ with unit weights. Without making use of the fact that $a_t(P_n) = \lceil \frac{n}{4} \rceil$, we can show that: 

\begin{lem}
\label{pathSubadd}
The sequence $\{a_t\left(P_n \right) \}_{n=1}^{\infty}$  is subadditive.
\end{lem}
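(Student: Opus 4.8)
The plan is to establish the defining inequality $a_t(P_{m+n}) \le a_t(P_m) + a_t(P_n)$ for all $m,n \ge 1$ by exhibiting an acquisition protocol on $P_{m+n}$ whose residual set has size at most $a_t(P_m)+a_t(P_n)$. Write $P_{m+n} = v_1 v_2 \cdots v_{m+n}$ and split it at the edge $v_m v_{m+1}$ into the two induced subpaths $L = v_1 \cdots v_m$ and $R = v_{m+1}\cdots v_{m+n}$, which are isomorphic to $P_m$ and $P_n$ respectively. Since every vertex of $P_{m+n}$ starts with weight $1$, the restriction of this initial configuration to $L$ (respectively $R$) is exactly the unit-weight configuration on $P_m$ (respectively $P_n$), so the optimal subprotocols apply verbatim.

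First I would run an optimal acquisition protocol on $L$, using only the edges internal to $L$, leaving a residual set of size $a_t(P_m)$; then independently run an optimal protocol on $R$, leaving a residual set of size $a_t(P_n)$. The central observation is that each move prescribed by the protocol on $L$ remains a legal acquisition move in the larger path $P_{m+n}$: it involves only vertices and an edge of $L$, and the weights on those vertices are never altered by anything happening on $R$, since the subpaths are vertex-disjoint and the connecting edge $v_m v_{m+1}$ is used by neither subprotocol. Hence the two protocols can be concatenated into a single legal sequence of moves on $P_{m+n}$, after which the set of vertices carrying nonzero weight is precisely the union of the two residual sets, of size $a_t(P_m) + a_t(P_n)$.

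This union need not itself be terminal in $P_{m+n}$, because a further move might be available across the cut edge $v_m v_{m+1}$, for instance if both $v_m$ and $v_{m+1}$ survive. That is harmless: any additional legal moves can only remove vertices from the surviving set, so continuing until no move remains produces a genuine residual set of $P_{m+n}$ of size at most $a_t(P_m)+a_t(P_n)$. Because $a_t(P_{m+n})$ is the minimum size taken over all residual sets, the subadditivity inequality follows immediately.

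I expect the only point genuinely requiring care to be the legality of composing the two subprotocols, specifically confirming that executing moves on $R$ cannot invalidate any move scheduled on $L$. This is immediate once one notes that the legality of an acquisition move depends only on the current weights of its two endpoints, and those weights are untouched by moves confined entirely to the opposite subpath; the interleaving order of the two protocols is therefore irrelevant.
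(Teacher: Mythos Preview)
Your proof is correct and follows essentially the same approach as the paper: split $P_{m+n}$ into subpaths $P_m$ and $P_n$, run optimal protocols on each, and observe that this yields a residual set of size at most $a_t(P_m)+a_t(P_n)$. Your version is in fact more careful than the paper's, since you explicitly address the possibility of a surviving legal move across the cut edge $v_m v_{m+1}$ and note that any further moves can only shrink the residual set.
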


\begin{proof}
Consider the graph $P_{n+m}$, with $m,n \in \mathbb{Z}$. If $P_{n+m}$ is subdivided into $P_n$ and $P_m$ and distinct acquisition protocols are run on each, then the resulting residual set has size $a_t(P_m)+a_t(P_n).$ Because $a_t(P_{n+m})$ is definitionally the size of the minimal residual set, the fact that it is possible to obtain a residual set of size $a_t(P_n)+a_t(P_m)$ gives the bound
$$a_t(P_{n+m}) \leq a_t(P_n) + a_t(P_m),$$
as desired.
\end{proof}

\begin{cor}
\label{fekete}
For $\{a_t(P_i) \}_{i=1}^{\infty}$, the limit $\lim\limits_{n\rightarrow\infty} \frac{a_t(P_n)}{n}$ exists and is equal to $\inf \frac{a_t(P_n)}{n}$. 
\end{cor}

\begin{proof}
This result follows directly from Lemma 2.2 and Fekete's Lemma. 
\end{proof}

Although it is intuitively obvious that this limit equals $1/4$, the power of subadditivity will become clear in the next section, where we use random weights.

\section{Total Acquisition on Randomly Weighted Graphs}

\subsection{Poisson Distribution}

In this section, we consider the total acquisition number of $P_n$ when each vertex begins with weight $\textrm{Poi}(1)$, i.e.\ the vertices have random weights determined by a sequence of independent Poisson variables with unit mean. We denote this specific configuration as $P_n^{\textrm{Poi}}$. In general, the upper case letter $A$ will be used for the acquisition number when it is viewed as a random variable. We can begin by proving that the limit $$\lim_{n\to\infty}\frac{E(A_t(P^{\textrm{Poi}}_n))}{n}$$ exists, and provide upper and lower bounds for $E(A_t(P^{\textrm{Poi}}_n))$.  We start with a few remarks.

\medskip

\noindent{\bf Remarks.} First, let us note that checking whether a given weighting of $P_n = (v_1, v_2,\ldots, v_n)$ can be
used to move the total weight onto one vertex can be done as follows.  Starting from $v_1$, we
push its weight to the right as much as possible, ending at $v_k$ for some
$k\le n$. Then, independently (and using the initial weighting), we start from $v_n$ and
push its weight to the left as much as possible, ending at $v_\ell$
for some $\ell\ge1$.
It is straightforward to see that if $k\ge\ell-1$, then our task is possible; otherwise, it is not.

Next, finding $a_t(P_n)$ (for a given weighting) can be easily done as follows. Suppose that weights on the subpath $(v_1, v_2,\ldots, v_k)$
can be moved to one vertex. If weights on the subpath $(v_1,v_2 ,\ldots,v_k,v_{k+1})$
can also be moved to one vertex, then this is at least as good strategy as
moving only weighs from $(v_1, v_2 ,\ldots, v_k)$ and then applying the best strategy
for the remaining path (consider simple coupling between the two strategies).
As a result, finding $a_t(P_n)$ for any weighting can be done with
an easy greedy algorithm (and with the support of a computer).

Finally, we get from the above remarks that $E(A_t(P_n^{\textrm{Poi}}))$ is an increasing function of $n$.

\begin{lem}
\label{pathSubadd}
The sequence $\{\expoi\}_{n=1}^\infty$ is subadditive.
\end{lem}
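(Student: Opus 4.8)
The plan is to mirror the deterministic subadditivity argument of Lemma~\ref{pathSubadd} pointwise in the random weights, and then take expectations. Fix $m,n\in\mathbb{Z}^+$ and realize $P_{n+m}^{\textrm{Poi}}$ by placing independent $\textrm{Poi}(1)$ weights $W_1,\dots,W_{n+m}$ on the vertices $v_1,\dots,v_{n+m}$. For each outcome of the weights, I would delete (conceptually) the edge $v_nv_{n+1}$, splitting the path into the initial subpath on $v_1,\dots,v_n$ and the terminal subpath on $v_{n+1},\dots,v_{n+m}$. Running an optimal acquisition protocol on each subpath separately, and using no moves across the deleted edge, produces a legal protocol on all of $P_{n+m}$ whose residual set is the disjoint union of the two residual sets. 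Since $A_t(P_{n+m}^{\textrm{Poi}})$ is by definition the minimum residual-set size over \emph{all} protocols on the full path, this yields the pointwise (almost sure) inequality
\begin{equation*}
A_t(P_{n+m}^{\textrm{Poi}}) \;\le\; A_t\big(v_1,\dots,v_n\big) + A_t\big(v_{n+1},\dots,v_{n+m}\big),
\end{equation*}
where the right-hand terms denote the acquisition numbers of the two subpaths under the inherited weights.

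The only genuinely new ingredient, relative to the unit-weight case, is the distributional bookkeeping. Because the weights are independent and identically distributed as $\textrm{Poi}(1)$, the restriction of the weighting to the first $n$ vertices is itself a copy of $P_n^{\textrm{Poi}}$, and the restriction to the last $m$ vertices is an independent copy of $P_m^{\textrm{Poi}}$. Hence the two random variables on the right-hand side have the same distributions as $A_t(P_n^{\textrm{Poi}})$ and $A_t(P_m^{\textrm{Poi}})$ respectively. Taking expectations of the pointwise inequality and applying linearity of expectation then gives
\begin{equation*}
E\big(A_t(P_{n+m}^{\textrm{Poi}})\big) \;\le\; E\big(A_t(P_n^{\textrm{Poi}})\big) + E\big(A_t(P_m^{\textrm{Poi}})\big),
\end{equation*}
which is exactly subadditivity.

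I do not expect a serious obstacle here; the content is essentially a verification that the coupling used in Lemma~\ref{pathSubadd} survives the passage to random weights. The one point requiring a line of care is the justification that cutting the edge and optimizing on each half is a valid global protocol whose residual size is additive over the halves — this is immediate because acquisition moves respect adjacency, so a protocol that never uses the edge $v_nv_{n+1}$ decomposes exactly into independent protocols on the two components. Everything else reduces to the observation that marginal and joint distributions of an i.i.d.\ sequence restricted to disjoint index blocks are preserved and independent, after which linearity of expectation finishes the argument. (As with Corollary~\ref{fekete}, this subadditivity will subsequently be combined with Fekete's Lemma to conclude that $\lim_{n\to\infty} E(A_t(P_n^{\textrm{Poi}}))/n$ exists.)
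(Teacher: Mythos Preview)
Your proposal is correct and follows essentially the same approach as the paper: apply the deterministic subadditivity argument of Lemma~2.2 pointwise to each sample realization of the Poisson weights, then take expectations. The paper's proof is in fact the one-line summary of exactly what you wrote out in detail.
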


\begin{proof}
Same as the proof of Lemma 2.2 for each sample realization.  We then take expectations to get the result.
\end{proof}

\begin{cor}
\label{fekete}
The limit $\lim\limits_{n\rightarrow\infty} \frac{\expoi}{n}$ exists and is equal to $\inf \frac{\expoi}{n}.$
\end{cor}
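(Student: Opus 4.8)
The plan is to recognize this as an immediate consequence of Fekete's Subadditive Lemma, in exact parallel with the way the analogous Fekete corollary of Section~2 followed from the subadditivity of $\{a_t(P_n)\}$. The preceding lemma has already supplied the only substantive ingredient, namely that the sequence $\{\expoi\}_{n=1}^\infty$ satisfies $E(A_t(P^{\textrm{Poi}}_{n+m})) \le E(A_t(P^{\textrm{Poi}}_n)) + E(A_t(P^{\textrm{Poi}}_m))$ for all $m,n \ge 1$. So the task reduces to checking the mild side conditions under which Fekete's Lemma produces the stated infimum formula.

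First I would record that the sequence is bounded below. The quantity $A_t(P^{\textrm{Poi}}_n)$ is the cardinality of a (possibly empty) residual set, hence a nonnegative random variable; taking expectations gives $\expoi \ge 0$ for every $n$, so $\inf_{n\ge1}\expoi/n$ is a finite nonnegative real number rather than $-\infty$. With subadditivity supplied by the previous lemma and this lower bound in hand, I would then simply invoke Fekete's Lemma, which asserts that for any subadditive sequence $\{b_n\}$ the ratio $b_n/n$ converges to $\inf_{n\ge1} b_n/n$. Applying it with $b_n = \expoi$ yields simultaneously the existence of $\lim_{n\to\infty}\expoi/n$ and its identification with $\inf_{n\ge1}\expoi/n$, which is precisely the claim.

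I do not anticipate any genuine obstacle here: all of the combinatorial content lives in the subadditivity lemma, whose proof the excerpt obtains by running independent acquisition protocols on the two subpaths for each sample realization and then taking expectations. The remaining argument is a verbatim application of a standard result, and the only point meriting a sentence of care is the finiteness of the infimum, which the nonnegativity of the acquisition number guarantees so that the conclusion is not vacuous.
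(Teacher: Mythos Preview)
Your proposal is correct and matches the paper's own proof, which simply invokes the subadditivity lemma together with Fekete's Lemma. The extra sentence you include about nonnegativity (ensuring the infimum is finite) is a harmless clarification the paper leaves implicit.
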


\begin{proof}
This result follows directly from Lemma~\ref{pathSubadd} and Fekete's Lemma.
\end{proof}

\begin{thm}
\label{poissonBounds}
The expected acquisition number of $P_{n}^{\textrm{Poi}}$ is bounded as
$$0.242n \leq \expoi \leq 0.375n.$$
\end{thm}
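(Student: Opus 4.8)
The plan is to recast $A_t(P_n^{\textrm{Poi}})$ through the greedy block decomposition from the Remarks: scanning from $v_1$, peel off the longest prefix that can be collapsed onto a single vertex, record one residual vertex, and repeat; the number of blocks produced equals $A_t(P_n^{\textrm{Poi}})$. Because the weights are i.i.d.\ Poisson and because whether a prefix $(v_s,\dots,v_{s+j})$ collapses depends only on the weights inside it (via the two-sided sweep test: left reach $\ge$ right reach $-1$), the successive block lengths form an essentially i.i.d.\ renewal sequence. Writing $\ell$ for the length of a generic block, renewal/subadditive reasoning gives $\expoi\sim n/E(\ell)$, so the two bounds are equivalent to $E(\ell)\ge 8/3$ (for the upper bound $0.375n$) and $E(\ell)\le 1/0.242\approx 4.13$ (for the lower bound). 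I would organize everything around $E(\ell)=\sum_{m\ge1}P(\ell\ge m)$, where $P(\ell\ge m)$ is exactly the probability that a length-$m$ window is collapsible to one vertex.

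For the upper bound on $\expoi$ I need a lower bound on $E(\ell)$, which is the easy direction: truncating the series, $E(\ell)\ge\sum_{m=1}^{M}P(\ell\ge m)$, and each term is a finite Poisson sum obtained from the sweep characterization (e.g.\ $P(\ell\ge1)=1$, and the length-$2$, length-$3$, length-$4$ collapsibility probabilities are explicit expressions in $e^{-1}$). Summing enough of these by hand should clear the threshold $8/3$, giving $\expoi\le0.375n$. Equivalently, one can simply exhibit the greedy protocol itself and bound the expected number of blocks by windowing, invoking the subadditivity Corollary to pass from a fixed window to all $n$.

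For the lower bound I need an upper bound on $E(\ell)$, i.e.\ control of the tail $\sum_{m>M}P(\ell\ge m)$. The point is that collapsing a long window forces one of the two sweeps to travel far, which—because moving an accumulated pile one step to the right requires the next weight to be at least the entire pile—demands near-doubling of the weights and is therefore exponentially unlikely. Establishing a clean geometric bound $P(\ell\ge m)\le C r^{m}$ with $r<1$ (by stochastic domination or a submultiplicativity estimate for the sweep events) lets me bound the tail and conclude $E(\ell)\le 1/0.242$, hence $\expoi\ge0.242n$. As an independent sanity check one can also produce a purely local lower bound by linearity of expectation: an initially-zero vertex can never receive weight (a move into it would require its current weight to dominate a positive source), so it is frozen at $0$ forever and blocks its neighbour; consequently any positive vertex flanked by two zeros lies in every residual set, and summing such forced-residual probabilities already yields a linear—though weaker—lower bound.

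The main obstacle is the lower-bound direction: making the renewal structure rigorous (consecutive blocks share the single boundary vertex used to certify non-collapsibility, and one must handle the finite-$n$ boundary and confirm via the Remarks that the greedy block count really equals the optimum), and above all proving the geometric tail bound $P(\ell\ge m)\le Cr^{m}$ uniformly in $m$. The collapsibility events couple the left and right sweeps in a way that resists a one-line domination argument, so the honest decay estimate is where the work concentrates; the looseness of this estimate (and of the truncation in the other direction) is exactly what separates $0.242$ from $0.375$ and what the later computer-assisted computation of the first many $P(\ell\ge m)$ tightens.
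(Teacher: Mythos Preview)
Your approach is not the paper's, and the paper's route is considerably simpler. The paper does \emph{not} analyze the greedy-block renewal at all. Instead it uses the observation you relegate to a ``sanity check'': a vertex of weight $0$ can never acquire positive weight, so no weight ever crosses it, and the path splits into independent \emph{islands} of consecutive positive-weight vertices separated by zeros. Because the weights are i.i.d.\ $\textrm{Poi}(1)$, each vertex is zero with probability $1/e$, so the island sizes are i.i.d.\ geometric with success probability $1/e$ and there are $n/e+O(1)$ islands. Wald's lemma then gives
\[
\expoi=\Big(\frac{n}{e}+O(1)\Big)\sum_{j\ge0}E\big(A_t(P_j)\mid\text{all weights positive}\big)\,P(|\Lambda_1|=j).
\]
The lower bound $0.242n$ follows by computing $E(A_t(P_j))$ exactly for $j\le3$ (the only nontrivial case being $P(A_t(P_3)=2)\approx0.10648$, a short Poisson sum) and then using monotonicity of $E(A_t(P_j))$ to bound the $j\ge3$ tail below by $1.106\cdot P(|\Lambda_1|\ge3)$. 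The upper bound $0.375n$ uses the same $j\le3$ values together with the trivial deterministic bound $a_t(P_j)\le(j+1)/2$ for $j\ge4$, summed against the geometric tail. Both directions are five-line computations once the island decomposition is in place.

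By contrast, your greedy-block renewal has a real dependency issue that you partly acknowledge but misdescribe: consecutive blocks do not share a vertex, but the event $\{\ell_1=k\}$ requires that $(v_1,\dots,v_{k+1})$ fail to collapse, which conditions the distribution of $w(v_{k+1})$, the first weight of the second block. So the $\ell_i$ are not i.i.d., and the relation $\expoi\sim n/E(\ell)$ needs a regeneration argument you have not supplied. More seriously, your lower bound on $\expoi$ rests entirely on an unproved geometric tail estimate $P(\ell\ge m)\le Cr^m$; the paper avoids any such estimate because the geometric decay is \emph{automatic} for island sizes. Your upper-bound direction via subadditivity and a finite window is sound, but you never carry out the numerics that would actually hit $0.375$. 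In short: the idea you dismissed as a crude sanity check is the whole proof, and the renewal machinery you built instead introduces exactly the dependence problems the island decomposition sidesteps.
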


\begin{proof}
Define an {\em island} as the ``clump'' of vertices to the left of the first zero weight; to the right of the last zero weight; or in between any two successive zero weights.  Islands are of non-negative size, and thus each consist of a possibly empty set of non-zero numbers.  The island size thus has a geometric distribution with ``success" probability $1/e$ and expected size  $e-1$, yielding an expected number of $n/e + c; 0 \le c \le1$, for the random number $\Lambda$ of islands. (Note that the expected number of zeros is $n/e$.) Theoretically, the expected total acquisition number could be calculated using the conditional probability expression
\begin{eqnarray*}
\expoi&=& \sum_{j=1}^\Lambda E\left[ a_t(P_{\vert\Lambda_j\vert})\right]\\
&=&\lr\frac{n}{e}+c\rr E[a_t(P_{\vert\Lambda_1\vert})]\\
&=&\lr \frac{n}{e}+c\rr \sum_{j=0}^n E(A_t(P_j^{\rm{Poi}}))P(\vert\Lambda_1\vert=j), 
\end{eqnarray*}
where the second equality follows from Wald's Lemma.  However, calculating $E\left[A_t(P_j^{\rm{Poi}}) \right]$ is difficult for arbitrary $j$. The probability that an island is of size $j$ equals $\lr1-\frac{1}{e}\rr^j\lr\frac{1}{e}\rr$.  Thus, there is a roughly $84\%$ probability that an island has size three or less, and a reasonable lower bound can be obtained by restricting the calculation to those cases. It is clear that $E\left[A_t(P_0^{\rm{Poi}}) \right] = 0$ and that $E\left[A_t(P_1^{\rm{Poi}}) \right] = E\left[A_t(P_2^{\rm{Poi}}) \right] = 1.$
For $P_3$, however, it is possible to have $A_t(P_3) = 1$ or $A_t(P_3) = 2.$ Because $A_t(P_3) = A_t(a-b-c)$ requires that $w(a) > w(b)\ge1$ and $w(c) > w(b)\ge1$, we condition on each of $w(a), w(b)$, and $w(c)$ being non-negative, and, setting $W(b)=k$, we get
\begin{eqnarray*}
P\left[A_t(P_3) = 2 \right] &=& \sum_{k\ge1}\frac{e^{-1}}{1-e^{-1}}\frac{1}{k!}\lr\sum_{j\ge k+1}\frac{e^{-1}}{(1-e^{-1})}\frac{1}{j!}\rr^2\\
&=&\frac{1}{(e-1)^3}\sum_{k\ge1}\frac{1}{k!}\lr\sum_{j\ge k+1}\frac{1}{j!}\rr^2\\
&=&\frac{e^2}{(e-1)^3}\sum_{k=1}^{\infty}\frac{1}{k!}\lr1-\frac{\Gamma(k+1,1)}{\Gamma(k+1)}\rr^2\\
&\approx& 0.10648.
\end{eqnarray*}
Thus $E[A_t(P_3)]$ can be calculated as
\begin{equation}
E[A_t(P_3)] = P\left[A_t(P_3) = 1 \right](1) + P\left[A_t(P_3) = 2 \right](2) \approx 1.10648.
\end{equation}
Using (1) and the monotonicity of $E(A_t(P_n^{\textrm{Poi}}))$, we can now  calculate a lower bound for $\expoi$ as
\begin{align*}
\expoi
&\ge \lr\frac{n}{e}+c\rr \left(\left(1-\frac{1}{e} \right)\frac{1}{e} + \left(1-\frac{1}{e} \right)^2\frac{1}{e} + 1.106\left(1-\frac{1}{e} \right)^3\ \right) \\
& \ge 0.242n.
\end{align*}

To obtain an upper bound, we use the fact that $a_t(P_j) \leq \frac{j+1}{2}$ for any $j$. Returning to our conditional probability expression, this allows us to construct an upper bound for $E(A_t(P_n^{\textrm{Poi}}))$ as 
\begin{align*}
\expoi&=\lr\frac{n}{e}+c\rr\lr\sum_{j=1}^3E(A_t(P_j))P(\vert\Lambda_1\vert=j)+\sum_{j\ge 4}E(A_t(P_j))P(\vert\Lambda_1\vert=j)\rr\\
&\leq \lr\frac{n}{e}+c\rr\sum_{j=1}^3E(A_t(P_j))P(\vert\Lambda_1\vert=j)+\lr\frac{n}{e}+c\rr\sum_{j\ge 4}\frac{j+1}{2}P(\vert\Lambda_1\vert=j)\\
&\le 0.178n+\frac{n}{e} \frac{1-\frac{1}{e}}{2e}\sum_{j=4}^{\infty} j\left(1-\frac{1}{e} \right)^{j-1}+0.029n  \\
&= 0.207n+ \frac{n}{e}\frac{1-\frac{1}{e}}{2e}\frac{(e-1)^3(3+e)}{e^2}  \\
&\approx 0.375n,
\end{align*}
which gives us the desired bounds for $\expoi.$
\end{proof}
The above bounds can certainly be improved, but we do not do so here---rather, we point out methods that might lead to a tightening.  First we can compute $P(A_t(P_4)=2)$ or even more higher order terms so as to improve the lower bound.   For the upper bound, one may do a more careful calculation by using the fact that $A_t(P_j)\le\lceil\frac{j}{2}\rceil$, and separating the argument for $j\ge4$ into the even and odd cases.   However, these methods are likely to yield only incremental improvements, and so we next report on the results of simulations which yield theoretical bounds that are vastly better than the ones above, and also suggest the value of the limiting constant.

\subsection{Simulations}

As we already remarked in Subsection~3.1, with the support of a computer, it is easy to find $a_t(P_j)$ for a given initial weighting. By considering all possible $k^j$ configurations of weights at most $k=k(j)$, we can easily estimate $E(A_t(P_j))$ from below and above. We considered all paths on at most 21 vertices to obtain the following bounds (for more details, see~\cite{program}).

\begin{center}
\begin{tabular}{ | c | c | c | c | }
 \hline
 $j$ & $k=k(j)$ & lower bound & upper bound \\ 
 \hline
3 & 8 & 1.106474556295647 & 1.106485236542439 \\ 
4 & 8 & 1.458146467559788 & 1.458160707876175 \\
5 & 8 & 1.858398253506155 & 1.858424954075593 \\
6 & 8 & 2.117547080007199 & 2.117579120662054 \\
7 & 8 & 2.376630970622960 & 2.376680811599442 \\
8 & 8 & 2.678679193248656 & 2.678736154318619 \\
9 & 7 & 2.990263585826933 & 2.990993168117246 \\
10 & 7 & 3.279939200172999 & 3.280749841020560 \\
11 & 7 & 3.567927403519968 & 3.568997441689286 \\
12 & 6 & 3.857599662110278 & 3.867074161371549 \\
13 & 6 & 4.153795998702126 & 4.165769950472480 \\
14 & 6 & 4.446442617528823 & 4.459336912818369 \\
15 & 5 & 4.680594361425691 & 4.792653655209101 \\
16 & 4 & 4.589627977247097 & 5.299393961081964 \\
17 & 4 & 4.829797279410342 & 5.675769081053113 \\
18 & 4 & 5.066501771940262 & 5.959683875778665 \\
19 & 3 & 3.298233451614096 & 7.696367715681969 \\
20 & 3 & 3.357920459863793 & 7.924328251247145\\
21 & 3 & 3.411118548530028 & 8.613707642327828 \\
 \hline
\end{tabular}
\end{center}

Based on that we get the following.

\begin{cor}
The expected acquisition number of $P_{n}^{\textrm{Poi}}$ is bounded as
$$0.29523n \leq \expoi \leq 0.29576n.$$
\end{cor}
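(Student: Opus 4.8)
The plan is to reuse the island–decomposition identity obtained in the proof of Theorem~\ref{poissonBounds} and to feed it the sharp numerical values of $E(A_t(P_j))$ recorded in the table above. Recall from that proof that
\[
\expoi = \lr\frac{n}{e}+c\rr \sum_{j\ge 1} E(A_t(P_j))\lr 1-\frac1e\rr^{j}\frac1e, \qquad 0\le c\le 1,
\]
so that, writing $S=\sum_{j\ge1}E(A_t(P_j))(1-1/e)^{j}(1/e)$, the coefficient of $n$ is exactly $S/e$ while the $c$-term is $O(1)$ and therefore dropped, exactly as in the passage from $\lr\frac{n}{e}+c\rr\sum_{j=1}^{3}(\cdots)$ to $0.178n$ in Theorem~\ref{poissonBounds}. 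It thus suffices to sandwich the single constant $S$, and the whole problem reduces to pinning $S$ down to within about $10^{-3}$.

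For the lower bound I would substitute the \emph{lower} entries of the table for $1\le j\le 18$ and then bound the tail $j\ge 19$ from below by monotonicity. Concretely, appending a vertex to the right end of an island cannot decrease its acquisition number: by the greedy description in the Remarks, the maximal ``collapsible blocks'' are determined prefix by prefix, so the decomposition of $v_1,\dots,v_j$ is untouched and the new vertex either lengthens the last block or starts a new one. Hence $E(A_t(P_j))\ge E(A_t(P_{18}))$ for every $j\ge 19$, and since $\sum_{j\ge 19}(1-1/e)^{j}(1/e)=(1-1/e)^{19}$ is explicit, the tail contributes at least $E(A_t(P_{18}))(1-1/e)^{19}$. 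Combining the tabulated partial sum with this tail estimate gives $S/e\ge 0.29523$.

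For the upper bound I would substitute the \emph{upper} entries of the table for \emph{all} $1\le j\le 21$, and only then pass to the crude bound $a_t(P_j)\le\lceil j/2\rceil$ for $j\ge 22$. The resulting tail $\sum_{j\ge 22}\lceil j/2\rceil(1-1/e)^{j}(1/e)$ is an arithmetic--geometric series that sums in closed form, and its value is tiny because $(1-1/e)^{22}$ is already of order $10^{-5}$. Adding this to the tabulated partial sum yields $S/e\le 0.29576$.

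The delicate point is the tail control: the target interval has width only about $5\times10^{-4}$, so the crude $\lceil j/2\rceil$ bound cannot be invoked too early. If one switched to it already at $j=19$ the upper estimate would overshoot the claimed constant; this is precisely why the computation is carried out to $j=21$, where the island-size distribution still places non-negligible mass and where the tabulated upper values $7.70,\,7.92,\,8.61$ comfortably beat $\lceil j/2\rceil$. Symmetrically, the lower bound genuinely needs the monotonicity tail estimate rather than simply discarding $j\ge 19$, since the tabulated partial sum through $j=18$ falls just short of $0.29523e$. The remaining work---evaluating the two finite partial sums and the closed-form tail to sufficiently many digits---is routine and is exactly what the computer computation in~\cite{program} supplies.
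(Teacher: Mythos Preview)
Your reconstruction is correct and is precisely the argument the paper leaves implicit. The paper offers no separate proof of the corollary beyond the phrase ``Based on that we get the following'' after the table; the intended derivation is exactly the island decomposition of Theorem~\ref{poissonBounds} with the tabulated conditional expectations substituted in, together with the monotonicity and $\lceil j/2\rceil$ tail controls you describe. Your observation about where to truncate---using the table through $j=18$ for the lower bound (since the $k=3$ rows for $j=19,20,21$ are weaker than what monotonicity gives) and through $j=21$ for the upper bound (since the tabulated upper values there still beat $\lceil j/2\rceil$)---is the right reading of why the table is carried that far.
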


Moreover, we performed a number of experiments on paths of length $n = 100, 000, 000, 000$. Simulations \emph{suggest} that $\expoi \approx 0.295531n$ (again, for more details, see~\cite{program}).

\subsection{dePoissonized model}

Although considering a Poisson model for weight distribution makes it significantly easier to bound $\expoi$, there is typically more interest in models where a fixed amount of weight is distributed on $P_n$.  In order to translate our result for the Poissonized model to this dePoissonized model, we begin by establishing two lemmas.

\begin{lem}
For $P_n$, assigning an initial weight of $\textrm{Poi}(1)$ chips to each vertex is equivalent to considering the model in which we generate the total number of chips according to a ${\rm Poi}(n)$ distribution, and then distribute them independently and uniformly on the $n$ vertices.
\end{lem}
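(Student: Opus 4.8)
The plan is to establish the equivalence between two ways of randomly weighting the path $P_n$: the \emph{direct} model, in which each vertex independently receives a $\textrm{Poi}(1)$ number of chips, and the \emph{two-stage} model, in which we first draw a total number of chips $N \sim \textrm{Poi}(n)$ and then scatter those $N$ chips independently and uniformly among the $n$ vertices. I would show these produce the same joint distribution on the weight vector $(w(v_1), \dots, w(v_n))$, since the total acquisition number is a deterministic function of this vector and hence identically distributed under both models.

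\medskip

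First I would compute the joint law in the two-stage model directly. Condition on $N = m$. Distributing $m$ distinguishable chips uniformly and independently over $n$ vertices makes the resulting occupancy vector $(k_1, \dots, k_n)$ with $\sum_i k_i = m$ multinomially distributed, so that
\begin{equation*}
P\big(w(v_1)=k_1, \dots, w(v_n)=k_n \mid N=m\big) = \binom{m}{k_1, \dots, k_n} \left(\frac{1}{n}\right)^m.
\end{equation*}
Then I would multiply by $P(N=m) = e^{-n} n^m / m!$, using $m = \sum_i k_i$, and simplify. The key algebraic step is that the multinomial coefficient $\binom{m}{k_1,\dots,k_n} = m!/\prod_i (k_i!)$ cancels the $m!$ in the denominator of the Poisson mass function, and the factor $n^m (1/n)^m = 1$ disappears, leaving
\begin{equation*}
P\big(w(v_1)=k_1, \dots, w(v_n)=k_n\big) = e^{-n} \prod_{i=1}^n \frac{1}{k_i!} = \prod_{i=1}^n \frac{e^{-1}}{k_i!}.
\end{equation*}

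\medskip

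This last product is exactly the joint mass function of $n$ independent $\textrm{Poi}(1)$ random variables, which is precisely the direct model. Hence the two models induce the same distribution on weight sequences, and therefore on $a_t(P_n)$, so their expected acquisition numbers agree. I do not anticipate a serious obstacle here: the result is essentially the classical ``Poissonization splits into independent Poissons'' fact (the superposition/thinning property of Poisson processes), and the only care needed is bookkeeping to confirm that the sum over all $m$ is handled correctly by noting that fixing the vector $(k_1,\dots,k_n)$ already fixes $m=\sum_i k_i$, so no residual summation over $m$ remains. The mild subtlety worth stating explicitly is why distinguishability of chips is harmless: although the two-stage model tracks labeled chips, the acquisition number depends only on the occupancy counts $k_i$, so summing over the labelings is exactly what produces the multinomial coefficient used above.
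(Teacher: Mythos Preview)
Your argument is correct and follows the same classical Poissonization route as the paper. The only cosmetic difference is that the paper illustrates the computation on a two-vertex marginal (summing over all possible totals $r\ge x+y$ and evaluating the residual series to $e^{n-2}$), whereas you compute the full joint law directly; since fixing the entire occupancy vector pins down $m=\sum_i k_i$, your version sidesteps that sum and is slightly cleaner.
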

\begin{proof}
One half of the proof follows from the fact that the sum of independent Poi(1) variables has a Poi($n$) distribution.  Next, consider a random distribution of $\textrm{Poi}(n)$ chips on $P_n$ as in the statement of the lemma. The probability that two particular vertices, $u,v$, receive $x,y$ chips respectively (the same argument holds for any number of vertices) is given by
\begin{align*}
P\left[w(u) = x, w(v)=y \right] &= \sum_{r=0}^{\infty} P\left[\sum_{i=1}^n w(v_i) = r \right]P\left[w(u) = x, w(v)=y  \left| \sum_{i=1}^n w(v_i) = r \right. \right] \\
&= \sum_{r=x+y}^{\infty} e^{-n} \frac{n^r}{r!} {r \choose {x,y}} \left(\frac{1}{n} \right)^{x+y}\left(1 - \frac{2}{n} \right)^{r-x-y} \\
&= \frac{e^{-n}}{x!y!} \sum_{r=x+y}^{\infty} \frac{(n-2)^{r-x-y}}{(r-x-y)!} \\
&= \frac{e^{-2}}{x!y!}
\end{align*}
which we recognize as the product of the probability that $w(u) = x; w(v)=y$ if the initial weights on the vertices are determined by an independent  $\textrm{Poi}(1)$ process, as desired.
\end{proof}
The following lemma is critical and valid only for special graphs such as $P_n$:
\begin{lem}
\label{azumaSetup}
Changing the initial weight on a single vertex can change $a_t(P_n)$ by at most $1$.
\end{lem}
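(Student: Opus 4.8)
The plan is to show that modifying the weight on a single vertex $v_i$ produces a new path whose optimal residual set differs in size from the original by at most one, in either direction. I would frame this as two separate inequalities: if $P_n$ has the original weighting and $P_n'$ has the weight on $v_i$ changed to an arbitrary new non-negative integer, then $a_t(P_n') \le a_t(P_n) + 1$ and symmetrically $a_t(P_n) \le a_t(P_n') + 1$. Since changing a weight is an involution (we can change it back), proving the first inequality for an arbitrary change automatically gives the second by swapping the roles of the two weightings, so it suffices to establish a one-sided bound that holds for \emph{any} alteration of a single coordinate.

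The main tool will be the greedy/island structure developed in the Remarks of Subsection~3.1. First I would fix an optimal acquisition protocol for the original path $P_n$, which partitions the vertices into maximal consumable blocks, each of which collapses to a single residual vertex. Changing $w(v_i)$ affects only the block (island, or more precisely the greedy segment) containing $v_i$; every block strictly to the left of $v_i$ and strictly to the right can be processed exactly as before, contributing the same number of residual vertices. So the entire question reduces to a local one: within the single affected segment, how much can the optimal count for that segment change when one interior weight is perturbed? The key claim is that a single segment's optimal residual count changes by at most one under a single-weight change.

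To prove the local claim I would use the left-push/right-push characterization from the Remarks: a subpath can be collapsed to one vertex precisely when the farthest index $k$ reachable by pushing weight rightward from the left end satisfies $k \ge \ell - 1$, where $\ell$ is the farthest index reachable pushing leftward from the right end. Changing one weight can only shift these push frontiers in a controlled way, so the greedy decomposition of the affected segment into collapsible pieces gains or loses at most one piece. Concretely, I would take the old optimal decomposition restricted to the affected region and argue that, after the weight change, either that same decomposition is still legal (residual count unchanged) or at most one additional cut is forced (residual count increases by one); running the greedy algorithm on the new weighting can only do better than this explicit strategy, so $a_t(P_n') \le a_t(P_n) + 1$.

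The hard part will be making the ``at most one additional cut'' bookkeeping airtight, because a perturbation at $v_i$ can in principle interact with collapsibility both immediately and through a cascade as the greedy frontier propagates past $v_i$. The cleanest way to handle this is to lean on subadditivity together with the greedy optimality noted in the Remarks, rather than trying to track the cascade move by move: one writes $P_n$ as the concatenation of the prefix ending just before the affected piece, the affected piece itself, and the suffix, bounds the middle piece's contribution before and after using that its optimal count shifts by at most one, and invokes that the greedy concatenation is at least as good as processing the three parts independently. Verifying that the affected piece's optimal count truly moves by no more than one -- i.e.\ that a one-coordinate change cannot split a single collapsible block into three or more, nor merge three into one -- is the crux and will require the explicit push-frontier inequality $k \ge \ell - 1$ applied to the boundaries of the affected piece.
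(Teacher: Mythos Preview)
Your proposal is correct and follows essentially the same route as the paper: decompose $P_n$ via the greedy algorithm into $a_t(P_n)$ collapsible subpaths, observe that only the subpath containing $v_i$ is affected, bound its new acquisition number by $2$, and then invoke the involution to get the other inequality. In fact, the paper is even terser than your plan on exactly the step you flagged as the crux---it simply asserts that the affected subpath's acquisition number rises by at most one---so your intention to justify this via the push-frontier criterion $k \ge \ell-1$ would flesh out a detail the paper leaves implicit.
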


\begin{proof}  The proof is an easy consequence of the remarks at the beginning of Section 3.1.
Indeed, after applying the greedy algorithm mentioned there, we decompose $P_n$ into $a_t(P_n)$ subpaths; each subpath has the total acquisition of 1 (with the initial weighting induced on corresponding vertices). Now, changing the initial weight on a single vertex can increase the total acquisition of the corresponding path by 1. Hence, globally, $a_t(P_n)$ can increase by at most 1. This finishes the proof as it also implies that it cannot decrease by more than 1 (if it decreases by more than that, then after switching back to the original weighting, the parameter increases by more than one).
\end{proof}

Even though the main intent of the use of Lemma 3.4 is dePoissonization, it also quickly gives a very sharp concentration of $A_t(P_n^{\textrm{Poi}})$ around $\expoi$. 

\begin{thm}
For $A_t(P_n^{\textrm{Poi}})$ determined by a series of random unit Poisson trials, $X_1, \dots, X_n$ and any $\phi(n)\to\infty$,
\begin{align*}
\textrm{Pr}\left[ \left| A_t\left(P_n^{\textrm{Poi}}  \right) - \expoi    \right|  > \sqrt{2n\phi(n)} \right] \rightarrow 0
\end{align*}
as $n \rightarrow \infty$ and $A_t\left(P_n^{\textrm{Poi}} \right)$ is therefore tightly concentrated in an interval of width $O(\sqrt{n\phi(n)})$ around $\expoi=\Theta(n)$.
\end{thm}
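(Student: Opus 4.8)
The plan is to exhibit $A_t(P_n^{\textrm{Poi}})$ as a function of the $n$ independent Poisson weights that obeys a bounded-differences (Lipschitz) condition, and then apply the Hoeffding--Azuma inequality to the associated Doob martingale. Write $A_t(P_n^{\textrm{Poi}}) = f(X_1, \ldots, X_n)$, where $X_1, \ldots, X_n$ are the independent $\textrm{Poi}(1)$ weights on $v_1, \ldots, v_n$. Lemma~3.4 says precisely that $f$ changes by at most $1$ when a single coordinate is altered; that is, $|f(\ldots, x_i, \ldots) - f(\ldots, x_i', \ldots)| \le 1$ for every index $i$ and every choice of the remaining coordinates.

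First I would define the Doob martingale $Y_i = E[A_t(P_n^{\textrm{Poi}}) \mid X_1, \ldots, X_i]$ for $0 \le i \le n$, so that $Y_0 = \expoi$ and $Y_n = A_t(P_n^{\textrm{Poi}})$. The standard computation---conditioning on $X_1, \ldots, X_{i-1}$, integrating out $X_i$ against an independent copy, and invoking the bounded-differences bound of Lemma~3.4---shows that each martingale increment satisfies $|Y_i - Y_{i-1}| \le 1$ almost surely. Applying Hoeffding--Azuma with all increment bounds equal to $1$ (so that $\sum_{i=1}^n c_i^2 = n$) then yields
\begin{align*}
\textrm{Pr}\left[ \left| A_t\left(P_n^{\textrm{Poi}}\right) - \expoi \right| > t \right] \le 2\exp\left( -\frac{t^2}{2n} \right)
\end{align*}
for every $t > 0$.

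Setting $t = \sqrt{2n\phi(n)}$ collapses the right-hand side to $2\exp(-\phi(n))$, which tends to $0$ whenever $\phi(n) \to \infty$; this is exactly the claimed conclusion. Since the bounds of Theorem~3.2 give $\expoi = \Theta(n)$, the deviation $\sqrt{2n\phi(n)}$ is of smaller order than the mean whenever $\phi(n) = o(n)$, so $A_t(P_n^{\textrm{Poi}})$ is indeed concentrated in a window of width $O(\sqrt{n\phi(n)})$ about $\expoi$.

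The only point demanding care is that the Poisson weights are \emph{unbounded}, so one cannot invoke a form of the inequality that presumes bounded summands. The key observation is that Hoeffding--Azuma (equivalently, McDiarmid's bounded-differences inequality) requires only that the \emph{martingale increments} be bounded, and these are controlled by the change in $f$ rather than by the magnitude of any individual $X_i$; Lemma~3.4 supplies exactly this uniform bound of $1$, irrespective of the Poisson tails. Thus the main---indeed the only---substantive ingredient is the Lipschitz property already established in Lemma~3.4, and the remainder is the routine martingale bookkeeping sketched above.
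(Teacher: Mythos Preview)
Your proposal is correct and follows essentially the same route as the paper: invoke the bounded-differences property from Lemma~3.4, apply Hoeffding--Azuma with unit increment bounds to obtain $2e^{-\phi(n)}\to 0$. Your write-up is in fact more explicit than the paper's (spelling out the Doob martingale and addressing the unboundedness of the Poisson weights), but the argument is the same.
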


\begin{proof}
\label{expectedConcentration}
From Lemma~\ref{azumaSetup}, we know that for every $i$ and any two sequences of possible outcomes ${x_1, \dots, x_n}$ and ${x_1, \dots, x_{i-1},x'_i,x_{i+1}\ldots,x_n}$,
\[
\left|\lr A_t(P_n^{\textrm{Poi}})\vert X_1 = x_1, \dots, X_n= x_n \rr - \lr A_t(P_n^{\textrm{Poi}})\vert X_j= x_j\ (j\ne i), X_i = x_i' \rr \right| \leq 1.
\]
It follows from the Hoeffding-Azuma inequality that
\[
\textrm{Pr}\left[ \left| A_t\left(P_n^{\textrm{Poi}}  \right) - E\left(A_t\left(P_n^{\textrm{Poi}}  \right)  \right)\right|  > \sqrt{2n\phi(n)} \right] \le
2e^{-\phi(n)}\to0,
\]as desired.
\end{proof}

We will let $\exn$ and $\exx$ respectively denote the expected total acquisition number when $n$ and $x$ tokens are randomly placed on $P_n$.
\begin{lem}
For all $x \in \left[n - \phi(n)\sqrt{n},n+\phi(n)\sqrt{n} \right]$, where $\phi(n)\to\infty$ is arbitrary,
\begin{align*}
\exn - \phi(n)\sqrt{n} \leq \exx \leq \exn+ \phi(n)\sqrt{n}
\end{align*}
\end{lem}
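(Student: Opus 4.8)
The plan is to prove the two-sided bound \emph{pointwise} by constructing a coupling between the $x$-token and $n$-token models that differ by exactly $|x-n|$ single-token placements, and then pass to expectations. Throughout I treat $x$ as the (nonnegative integer) number of tokens lying in the stated real interval, so that $|x-n|\le\phi(n)\sqrt{n}$.

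First I would set up the coupling. Assume without loss of generality that $x\ge n$; the case $x<n$ is identical with the roles of the two configurations reversed. On a single probability space, place tokens $1,2,\ldots,x$ independently and uniformly at random on the vertices of $P_n$. Reading off only the positions of tokens $1,\ldots,n$ produces a configuration distributed exactly as $P_n^n$, while reading off all $x$ positions produces a configuration distributed exactly as $P_n^x$. The essential feature is that, on this joint space, the $x$-token configuration is obtained from the $n$-token configuration by adding tokens $n+1,\ldots,x$ one at a time.

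Next I would invoke Lemma~\ref{azumaSetup}. Each added token increases the weight of a single vertex by one, which is precisely a single-vertex weight change, so by Lemma~\ref{azumaSetup} it alters $a_t(P_n)$ by at most $1$. Writing $C_n, C_{n+1},\ldots,C_x$ for the intermediate configurations and applying the triangle inequality to the telescoping sum gives, on every sample point of the coupled space,
\[
\left| A_t(P_n^x) - A_t(P_n^n)\right| \;\le\; \sum_{i=n}^{x-1}\left| a_t(C_{i+1}) - a_t(C_i)\right| \;\le\; x-n.
\]
Finally, since this holds pointwise, the inequality $|E[Y]|\le E[|Y|]$ yields $\bigl|\exx-\exn\bigr|\le |x-n|$, and the hypothesis $|x-n|\le\phi(n)\sqrt{n}$ produces the claimed bound $\exn-\phi(n)\sqrt{n}\le\exx\le\exn+\phi(n)\sqrt{n}$.

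The argument is short, and the only step demanding genuine care is the validity of the coupling: one must check that the marginal on the first $n$ tokens is truly the $n$-token uniform model and that the full configuration is the $x$-token model, which holds because all placements are i.i.d.\ uniform. The remaining potential pitfall is purely notational—ensuring that $x$ is understood as an integer token count within the real interval so that $|x-n|\le\phi(n)\sqrt{n}$ feeds directly into the final estimate.
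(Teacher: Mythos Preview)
Your argument is correct and is exactly the approach the paper has in mind: the paper's proof is a single sentence pointing to Lemma~3.4 and noting that the bound already holds pointwise before taking expectations, and you have simply made explicit the coupling (place $\max(n,x)$ i.i.d.\ tokens and read off two marginals) and the telescoping application of Lemma~\ref{azumaSetup} that underlie that remark. Nothing further is needed.
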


\begin{proof}
This follows immediately from Lemma 3.4, and, moreover, holds for the random variable $A_t(P_n^n)$ as well, before expectations are taken.
\end{proof}

Together, these lemmas can be used to show that, when $n$ is sufficiently large, the bounds from Theorem~\ref{poissonBounds} also apply to the dePoissonized model.

\begin{thm}
For the dePoissonized chip distribution process on $P_n$,
\begin{align*}
\lim_{n\rightarrow\infty} \frac{\exn}{n} = \lim_{n\rightarrow\infty} \frac{\expoi}{n}.
\end{align*}
That is, the limit both exists and is identical to the limit for the Poissonized model.
\end{thm}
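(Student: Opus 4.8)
The plan is to sandwich the dePoissonized quantity $\exn$ between two copies of the Poissonized quantity $\expoi$ evaluated at nearby arguments, using the concentration of the $\mathrm{Poi}(n)$ total weight together with the Lipschitz-type control from Lemma~3.4 and its corollary (Lemma~3.6). First I would invoke Lemma~3.3 to reinterpret $P_n^{\textrm{Poi}}$ as the model in which a random number $N\sim\mathrm{Poi}(n)$ of indistinguishable chips is thrown uniformly and independently onto the $n$ vertices. Conditioning on the total $N=x$, the conditional distribution of the weighting is exactly the dePoissonized model with $x$ chips, so I can write
\begin{align*}
\expoi = \sum_{x=0}^{\infty} \exx \, P(N=x).
\end{align*}
This identity is the bridge between the two models, and everything downstream is concentration bookkeeping.

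Next I would fix an arbitrary $\phi(n)\to\infty$ growing slowly (say $\phi(n)=\log n$) and split the sum at the window $I_n=[\,n-\phi(n)\sqrt{n},\,n+\phi(n)\sqrt{n}\,]$. Since $N\sim\mathrm{Poi}(n)$ has mean $n$ and standard deviation $\sqrt{n}$, Chebyshev (or a Chernoff bound) gives $P(N\notin I_n)=O(\phi(n)^{-2})\to 0$. On the good event $\{N\in I_n\}$, Lemma~3.6 supplies the two-sided bound $\exn-\phi(n)\sqrt{n}\le \exx\le \exn+\phi(n)\sqrt{n}$ for every $x\in I_n$. Feeding these into the conditioning identity, the in-window contribution is pinned to $\exn$ up to an additive $O(\phi(n)\sqrt{n})$, while the out-of-window contribution must be controlled separately. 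Dividing by $n$ and letting $n\to\infty$, the error terms $\phi(n)\sqrt{n}/n=\phi(n)/\sqrt{n}\to 0$ vanish, and one obtains $\expoi/n$ and $\exn/n$ sharing the same limit, which exists by Corollary~3.2.

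The hard part will be bounding the out-of-window tail $\sum_{x\notin I_n}\exx\,P(N=x)$. Here one cannot simply use Lemma~3.6, since that estimate is only asserted inside $I_n$; instead I would use the crude deterministic bound $\exx\le a_t$-type inequality $\exx \le \lceil n/2\rceil+1$ (the residual set of $P_n$ never exceeds its maximum independent set size, and in the dePoissonized model with $x$ chips the residual set still lives on $n$ vertices). Combined with the super-polynomially small Poisson tail $P(N\notin I_n)$, the product $\lceil n/2\rceil\cdot P(N\notin I_n)$ is $o(n)$ provided $\phi(n)$ grows fast enough (any $\phi(n)\to\infty$ with, say, $\phi(n)^2/\log$ of the tail suffices); a Chernoff tail for the Poisson makes $P(|N-n|>\phi(n)\sqrt n)$ decay like $e^{-c\phi(n)^2}$, which beats the linear factor comfortably. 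The one subtlety to verify carefully is monotonicity and measurability so that the sum rearrangements are legitimate, but the structural content is entirely in the tail estimate, after which the squeeze between the two displayed inequalities closes the argument.
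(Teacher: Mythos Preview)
Your proposal is correct and follows essentially the same route as the paper: condition on the Poisson total $N$, split at the window $[n-\phi(n)\sqrt n,\,n+\phi(n)\sqrt n]$, use Lemma~3.6 inside and a crude bound outside, then divide by $n$ and invoke Corollary~3.2 for existence. The only simplification you are missing is that Chernoff is unnecessary for the tail: since $\exx\le n$ always (the residual set sits in $n$ vertices), Chebyshev alone gives a tail contribution of at most $n/\phi(n)^2=o(n)$, exactly as the paper does.
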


\begin{proof}
By Chebychev's inequality, with probability at most $1-1/\phi^2(n)$ we have $|x-n|\le\sqrt{n}\phi(n)$ if $x\sim{\rm Poi}(n)$, so that by Lemma 3.6
\[\vert A_t(P_n^n)-A_t(P_n^x)\vert\le\sqrt{n}\phi(n).\]  On the other hand, if $|x-n|>\sqrt{n}\phi(n)$, then trivially 
\[\vert A_t(P_n^n)-A_t(P_n^x)\vert\le n.\]
Combining the above two facts, we see that for any fixed $\phi(n)=o(\sqrt{n})$ such that $\phi(n) \to \infty$ as $n \to \infty$, and with $B_n=[n -\phi(n)\sqrt{n}, n +\phi(n)\sqrt{n}]$
\begin{align}
\frac{\expoi}{n} &= \sum_{x \geq 0} \frac{e^{-n}n^x}{x!}\frac{\exx}{n}\nonumber \\
&= \sum_{x\in B_n} \frac{e^{-n}n^x}{x!}\frac{\exx}{n} +  \sum_{x \not\in B_n} \frac{e^{-n}n^x}{x!}\frac{\exx}{n} \\
&\le\frac{\exn}{n}+\frac{\phi(n)}{\sqrt n}+\frac{1}{\phi^2(n)}\lr\frac{\exn}{n}+1\rr \nonumber \\
& = (1+o(1)) \frac{\exn}{n},
\end{align}
as $\expoi = \Omega(n)$.
Likewise by just including the first term in (2), we get that
\begin{equation}
\frac{\expoi} {n} \ge (1-o(1))\lr\frac{\exn}{n}-\frac{\phi(n)}{\sqrt{n}}\rr = (1-o(1)) \frac{\exn}{n}.\end{equation}
Inequalities (3) and (4) prove the result.
\end{proof}

\subsection{Uniform Distribution}
So far, our discussion has focused primarily on optimal acquisition protocols. For small examples or particularly simple graphs, it is often possible to definitively determine the optimal acquisition protocol. In larger or more complicated cases, however, doing so becomes laborious and complexity issues become more relevant. In order to sidestep this issue, we shift in this section to considering the size of the residual set produced by an algorithmic acquisition protocol. We will consider the process which is defined in Theorem~\ref{yiguangAlg}, which offers an algorithmic acquisition protocol for instances of the randomly weighted path where each vertex has a unique weight. In order to motivate the use of this algorithm, however, we first state below one of the main results of [2], namely that if a total of $t\gg n^5$ chips are distributed on $P_n$ using a uniform random process, the probability that two or more vertices have the same initial weight goes to zero as $t \rightarrow \infty$. (If $t\ll n^5$ this probability is tending to 1 as $n \to \infty$.)  This result exhibits a scenario under which the conditions of Theorem 3.9 are satisfied.

\begin{lem}
\label{unique}
Let a total of $t$ chips, where $t \gg n^5$, be distributed on $P_n$ using a uniform random process and let $X=X(n)$ denote the number of pairs of vertices that receive the same number of chips. Then,
\begin{align*}
\lim_{n \rightarrow \infty} Pr\left[X \geq 1 \right] = 0.
\end{align*}
\end{lem}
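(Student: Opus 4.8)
The plan is to use the first moment method. Write $W_i$ for the random number of chips received by vertex $v_i$; since each of the $t$ chips is placed on a uniformly random vertex independently, the vector $(W_1,\dots,W_n)$ is multinomial with $t$ trials and uniform cell probabilities $1/n$. By Markov's inequality $\Pr[X\ge 1]\le E[X]$, and by symmetry $E[X]=\binom{n}{2}\Pr[W_1=W_2]$. Thus it suffices to establish the collision estimate $\Pr[W_1=W_2]=O(\sqrt{n/t})$; since $\binom{n}{2}=\Theta(n^2)$, this yields $E[X]=O(n^{5/2}/\sqrt{t})$, which tends to $0$ precisely under the hypothesis $t\gg n^5$.

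To bound the collision probability I would condition on the total $S=W_1+W_2$, which is $\mathrm{Bin}(t,2/n)$ with mean $\mu=2t/n$. Given $S=m$, each of the $m$ chips sitting on $\{v_1,v_2\}$ is equally likely to occupy either vertex, so $W_1\mid\{S=m\}\sim\mathrm{Bin}(m,1/2)$, whence
\[
\Pr[W_1=W_2\mid S=m]=\binom{m}{m/2}2^{-m}\,\mathbf{1}[m\text{ even}]\le\sqrt{\tfrac{2}{\pi m}},
\]
the last step being the standard Stirling bound on the central binomial coefficient. Summing over $m$, I split the range at $m=\mu/2=t/n$. For $m\ge t/n$ the displayed bound is $O(\sqrt{n/t})$, and the corresponding probabilities $\Pr[S=m]$ sum to at most $1$, contributing $O(\sqrt{n/t})$ in total.

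For the complementary range $m<\mu/2$ I would bound $\Pr[W_1=W_2\mid S=m]\le 1$ and control $\Pr[S<\mu/2]$ by a Chernoff estimate, giving a tail of order $e^{-\mu/8}=e^{-t/(4n)}$; because $\mu=2t/n\gg n^4\to\infty$ this is super-exponentially small and dominated by $\sqrt{n/t}$, so it is negligible. Combining the two ranges yields $\Pr[W_1=W_2]=O(\sqrt{n/t})$, completing the argument. The one genuine obstacle is obtaining the \emph{sharp} order $\Theta(\sqrt{n/t})$ rather than a crude bound: a naive estimate using only the maximal cell probability would miss the critical exponent. The conditioning trick circumvents any appeal to a local central limit theorem and isolates exactly the central binomial coefficient, whose $\Theta(1/\sqrt{m})$ decay produces the factor $\sqrt{n/t}$. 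Since this order is best possible, $n^5$ is the true threshold, and the parenthetical claim that $\Pr[X\ge 1]\to 1$ when $t\ll n^5$ would follow from a matching second-moment computation.
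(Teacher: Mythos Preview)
Your argument is correct. The paper, however, does not prove this lemma at all: it is quoted as ``one of the main results of [2]'' (Czabarka, Marsili, and Sz\'ekely, \emph{Threshold functions for distinct parts: revisiting Erd\H{o}s--Lehner}) and used as a black box to motivate the distinct-weights hypothesis of Theorem~3.9. So there is no in-paper proof to compare against.

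What you have supplied is a clean, self-contained first-moment proof. The key step---conditioning on $S=W_1+W_2$ so that the collision probability reduces to the central term of $\mathrm{Bin}(m,1/2)$---is exactly the right device here: it replaces any appeal to a local CLT or to the Erd\H{o}s--Lehner machinery by the elementary Stirling estimate $\binom{m}{m/2}2^{-m}\le\sqrt{2/(\pi m)}$, and the split at $m=\mu/2$ with a Chernoff tail handles the small-$m$ contribution (including $m=0$) painlessly. Your final bookkeeping is also right: $E[X]=O(n^{5/2}/\sqrt{t})$, which tends to $0$ exactly when $t\gg n^5$, and the Chernoff term $n^2 e^{-t/(4n)}$ is negligible since $t/n\gg n^4$. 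The remark that the $n^5$ threshold is sharp (and that the $t\ll n^5$ side would follow from a second-moment computation) is consistent with what the paper asserts parenthetically, again by citation rather than proof.
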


In what follows, we assume that the path is weighted such that vertex weights are distinct. According to Lemma 3.8, this may be realized with high probability by placing, e.g.,  $t=t_n\gg n^5$ tokens randomly on $P_n$ but we do not use this fact explicitly or implicitly.

\begin{thm}
\label{yiguangAlg}
Let $P_n$ be weighted by using a random permutation of $n$ distinct integers $w_1, \dots, w_n$. If each acquisition move consists of the vertex with the highest weight receiving the weight of its immediate neighbors, then the expected acquisition number $A_{td}$ satisfies
$$\lim_{n\rightarrow \infty}\frac{E[A_{td}(P_n)]}{n} = \frac{e^2-1}{2e^2}.$$
\end{thm}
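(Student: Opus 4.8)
The plan is to exploit the fact that this particular protocol is driven entirely by the \emph{position of the current maximum}, which yields a clean recursive decomposition of the residual set. Since the weights are distinct, the globally heaviest vertex is never absorbed by anyone; when it plays, it absorbs all of its (one or two) live neighbours, zeroing them out and thereby permanently isolating itself as a member of the residual set. Those newly-zeroed neighbours split the path into two shorter subpaths that no longer interact, and the same rule is applied recursively within each. The first thing I would verify carefully is that the residual set is genuinely insensitive to the order in which moves belonging to different subpaths are interleaved (a confluence statement): because a zeroed vertex can never again be heaviest and so can never re-acquire weight, the two sides evolve independently, and processing them in global-weight order produces the same survivor set as processing each in isolation. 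I would also note that a subpath whose boundary abuts a zeroed vertex behaves exactly like a free-standing path of the corresponding length, since its own maximum, if located at that boundary, simply absorbs its single live neighbour --- matching the endpoint behaviour of a smaller path.

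With the decomposition in hand, I would condition on the position $i$ of the maximum, which is uniform on $\{1,\dots,n\}$; given $i$, the weights on the two sides are in independent, uniformly random relative order. Writing $f(n)=\exa$, the endpoint cases ($i=1$ or $i=n$) contribute $1+f(n-2)$ and each interior case contributes $1+f(i-2)+f(n-i-1)$, so that averaging yields
\[
f(n) = 1 + \frac{2}{n}\Bigl( f(n-2) + \sum_{j=0}^{n-3} f(j) \Bigr),
\]
with $f(0)=0$ and $f(1)=f(2)=1$. The sum is the only awkward feature; I would eliminate it by forming $n f(n) - (n-1) f(n-1)$, which telescopes the partial sums and leaves the clean recurrence
\[
n\,f(n) = (n-1)\,f(n-1) + 2\,f(n-2) + 1 \qquad (n\ge 3).
\]

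Next I would pass to the ordinary generating function $F(x)=\sum_{n\ge0} f(n)x^n$. Multiplying the recurrence by $x^n$, summing, and using $\sum_n n f(n) x^n = x F'(x)$ turns it into the first-order linear ODE
\[
(1-x)\,F'(x) - 2x\,F(x) = \frac{1}{1-x}.
\]
Its integrating factor is $e^{2x}(1-x)^2$, which collapses the left-hand side to $\bigl(e^{2x}(1-x)^2 F(x)\bigr)' = e^{2x}$; integrating and fixing the constant by $F(0)=f(0)=0$ gives the closed form
\[
F(x) = \frac{1-e^{-2x}}{2(1-x)^2}.
\]

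Finally I would extract the asymptotics by singularity analysis. The numerator $1-e^{-2x}$ is entire, so the only singularity of $F$ is the double pole at $x=1$, where $1-e^{-2x}\to 1-e^{-2}$; hence $F(x)\sim \tfrac{1-e^{-2}}{2}(1-x)^{-2}$ as $x\to1$, and since $[x^n](1-x)^{-2}=n+1$ we obtain $f(n)\sim \tfrac{1-e^{-2}}{2}\,n$. Therefore
\[
\lim_{n\to\infty}\frac{\exa}{n} = \frac{1-e^{-2}}{2} = \frac{e^2-1}{2e^2},
\]
as claimed. I expect the genuine work to lie entirely in the first paragraph: making the recursive/confluence description airtight (independence of the two sides after conditioning on the maximum, and order-independence of cross-segment moves), after which the recurrence, the ODE, and the coefficient asymptotics are all routine.
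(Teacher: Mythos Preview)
Your proposal is correct and follows essentially the same route as the paper: condition on the position of the maximum to obtain $f(n)=1+\tfrac{2}{n}\sum_{i=0}^{n-2}f(i)$, convert to a first-order linear ODE for the generating function, solve to get $F(x)=\tfrac{1-e^{-2x}}{2(1-x)^2}$, and read off the constant $\tfrac{1-e^{-2}}{2}$. The only cosmetic differences are that the paper goes directly from the summed recursion to the ODE (without your differencing step), extracts coefficients explicitly via a Cauchy product rather than by singularity analysis, and fixes the integration constant from $f(2)=1$ rather than $F(0)=0$; your treatment of the confluence/independence of the two subpaths is in fact more careful than the paper's.
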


\begin{proof}
Let the chips be randomly distributed on $P_n$. Let $w_k(i)$, for $1 \leq i \leq n$, denote the weight of vertex $i$ after the $k$th step of the algorithm. Initially, each vertex is equally likely to have the largest weight. Without loss of generality, suppose $w_0(j)> w_0(i)$ for all $i<j$ and $i>j$. If $j$ has two neighbors, then at the end of the first step $w_1(j)= w_0(j-1)+w_0(j)+w_0(j+1)$. If $j$ has one neighbor, then $w_1(j) = w_0(j)+w_0(j-1)$ or $w_1(j) = w_0(j)+w_0(j+1)$. After the first step, vertex $j$ cannot acquire the weight of any other vertices in the path because all its neighbors have zero weight.  Therefore, calculating the acquisition number of $P_n$ reduces to calculating the acquisition numbers of the resulting smaller path or paths, as shown in Figure~\ref{fig:algReduction}, since
$$a_{td}(P_n) = a_{td}(P^{'}) + a_{td}(P^{''}) + 1.$$ 

\begin{figure}
\centering
	\begin{subfigure}[b]{0.4\textwidth}
	\includegraphics[scale=1]{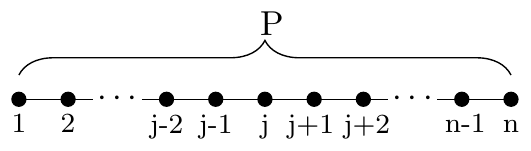}
	\end{subfigure}
	\begin{subfigure}[b]{0.4\textwidth}
	\includegraphics[scale=1]{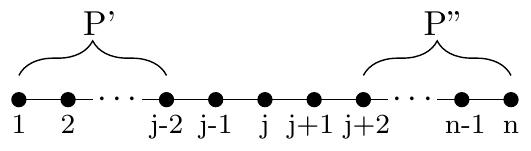}
	\end{subfigure}
	\caption{An illustration of $P_n$ before (left) and after (right) the first step of the algorithm. If vertex $j$ has the highest weight, then calculating the acquisition number of $P$ reduces to calculating the acquisition numbers of $P'$ and $P''$.}
	\label{fig:algReduction}
\end{figure}

We therefore obtain the identity
$$E[A_{td}(P_n)]= 1 + \frac{1}{n}\left(\sum_{i=0}^{n-2}E[A_{td}(P_i)] + E[A_{td}(P_{n-2-i})]\right) = 1 + \frac{2}{n}\sum_{i=0}^{n-2}E[A_{td}(P_i)].$$
Let $G(x)=\sum_{i\geq0}E[A_{td}(P_i)]x^i$ be the generating function for $E[A_{td}(P_n)]$. In order to obtain a closed form expression for $G(x)$, we apply standard generating function techniques to calculate:
\begin{align*}
nE[A_{td}(P_n)] = n + 2\sum_{i=0}^{n-2}E[A_{td}(P_i)] \\
xG'(x)=\frac{x}{(1-x)^2}+2G(x)\frac{x^2}{1-x}.
\end{align*}
Solving the resulting differential equation, we find
\begin{align*}
G(x)=\frac{1+2Ce^{-2x}}{2-4x+2x^2}=\frac{Ce^{-2x}}{(x-1)^2}+\frac{1}{2(x-1)^2},
\end{align*}
where $C$ is an unknown constant. We would like to find an explicit formula for $E[A_{td}(P_n)]$. To do so, we use known generating series representations to note that
\begin{align*}
\frac{Ce^{-2x}}{(x-1)^2}=C \left(\sum_{i=0}^n\frac{(-2)^i}{i!}x^i\right)\left(\sum_{i=0}^n(j+1)x^j\right),
\end{align*}
and equate the $x^n$ coefficients to obtain
\begin{align*}
E[A_{td}(P_n)] = C\sum_{i=0}^n(n+1-i)\frac{(-2)^i}{i!} + \frac{n+1}{2}.
\end{align*}
We then use the fact that $A_{td}(P_2) = 1$ to determine that $C= -1/2$. Therefore,
\begin{align*}
E[A_{td}(P_n)] = -\frac{1}{2}\sum_{i=0}^n(n+1-i)\frac{(-2)^i}{i!} + \frac{n+1}{2}
\end{align*}
and
\begin{align*}
\lim_{n\rightarrow \infty}\frac{E[A_{td}(P_n)]}{n} =\frac{e^2-1}{2e^2}.
\end{align*}
This finishes the proof of the result.
\end{proof}
Assuming that each vertex has unique weight, we can similarly apply this algorithm to cycles, stars, wheels, and complete bipartite graphs. After a single iteration of the algorithm, the cycle graph $C_n$ reduces to $P_{n-3}$ and therefore 
\begin{align*}
E[A_{td}(C_n )]= 1 + E[A_{td}(P_{n-3})].
\end{align*}
 For the star graph $S_n$, there are two possible cases. Let $v_c$ denote the center vertex of $S_n$. If $v_c$ has the highest weight, then during the first iteration it will acquire the weight of all other vertices and the acquisition number will trivially be $1$. If $v_c$ is not the vertex with highest weight, then during the first iteration one of the leaves will acquire the weight on $v_c$, leaving $n-1$ unconnected vertices and giving the graph the acquisition number $n-1$. Thus, we have
\begin{align*}
E[A_{td}(S_n)] = \frac{(n-1)^2}{n} + \frac{1}{n} = \frac{n^2-2n}{n}.
\end{align*}
For the wheel graph $W_n$, there are again two cases. Again, let $v_c$ denote the center vertex of $W_n$. If $v_c$ has the highest weight, then it will similarly acquire the weight of all other vertices, producing an acquisition number of 1. If $v_c$ is not the vertex with highest weight, then after the first iteration $W_n$ will reduce to $C_n$. It follows that we have
\begin{align*}
E[A_{td}(W_n)] &=   \frac{1}{n} + \frac{n-1}{n}\left(1 + E[A_{td}(P_{n-3})] \right)
\end{align*}
Finally, let us consider the complete bipartite graph $K_{n,m} = (U,V,E)$. Let $v^*$ denote the vertex with highest weight. If $v^* \in U,$ then during the first iteration of the algorithm $v^*$ will acquire the weight of all vertices in $V$, leaving $n$ vertices in $U$ totally disconnected and producing an acquisition number of $n$. If $v^* \in V,$ then during the first iteration $v^*$ will similarly acquire the weight of all vertices in $U$, producing an acquisition number of $m$. Thus, we have
\begin{align*}
E[A_{td}(K_{n,m})] = \frac{n^2 + m^2}{n+m}.
\end{align*}
Similarly, the multipartite graph $K_{n_1,\dots,n_k}$ has 
\begin{align*}
E[A_{td}(K_{n1,\dots,n_k})] = \frac{\sum_{i=1}^k n_i^2}{\sum_{i=1}^k n_i}.
\end{align*}

\section{Open questions}

Finally, we can conclude by offering some open questions that arose during our investigation of randomly-weighted graphs.

\begin{ques}
If $t$ chips are randomly distributed, what are the expected total acquisition numbers of $K_{n,n}$, $K_{n,m}$, $K_{1,m}$, $L_n$, and $G_{m,n}$?
\end{ques}

\begin{ques}{(Diameter 2 graph)}
For any graph $G$ with diameter two, it's known that ${a_t(G) \leq 32 \ln n \ln \ln n}$ \cite{totalAcquisition} and conjectured that ${a_t(G) \leq c}$ (where perhaps $c=2$). If we instead randomly distribute $n$ chips on a graph with diameter two, can $a_t(G)$ be similarly bounded? What if we randomly distribute $t$ chips?
\end{ques}

\begin{ques}
In our acquisition protocol algorithm, each acquisition move involves transferring weight to the highest weighted vertex. Can an acquisition protocol algorithm where acquisition moves are confined to transferring weight away from the vertex with lowest weight be well defined? If so, what is the expected size of the resultant residual set?
\label{ques:algProtocol}
\end{ques}

\begin{ques}
For the canonical acquisition problem, where each vertex of $G$ begins with weight 1, there exists an \emph{acquisition game} variant where two players, Max and Min, make alternate acquisition moves in an attempt to, respectively, maximize and minimize the size of the residual set. The \emph{game acquisition number}, $a_g(G)$, is defined as the size of the residual set under optimal play. Similar investigations could be done on the game acquisition number of randomly weighted graphs.
\end{ques}

In our preliminary investigation into Question~\ref{ques:algProtocol}, we found that the major difficulty in designing such an algorithm is in ensuring that it is well-defined. Even if all vertices of $P_n$ initially have unique weight, it is possible that an acquisition move could result in two vertices subsequently having equal weight. In such a case, it is not clear which vertex should transfer its weight in the next step of the algorithm. Any well-defined algorithm must be able to handle this case without influencing the size of the resulting residual set.

\%bibliographystyle{plainnat}

\end{document}